\newtheorem{theorem}{Theorem}[section]
\theoremstyle{definition}
\newtheorem{remark}[theorem]{Remark}
\numberwithin{equation}{section}
\newcommand{\calF}{\mathcal{F}}
\newcommand{\calG}{\mathcal{G}}
\newcommand{\calA}{\mathcal{A}}
\newcommand{\calW}{\mathcal{W}}
\newcommand{\calB}{\mathcal{B}}
\newcommand{\calH}{\mathcal{H}}
\newcommand{\R}{\mathds{R}}
\newcommand{\wh}{\widehat}
\newcommand{\wt}{\widetilde}
\DeclareMathOperator*{\argmin}{arg\,min}
\def \bv {{\bf v}}
\def \bz {{\bf z}}
\def\bx{{\bf x}}
\title[Billiard balls]{An improved upper bound on the number \\ of  billiard ball collisions}
\author{Krzysztof Burdzy}
\address{Department of Mathematics, Box 354350, University of Washington, Seattle, WA 98195}
\email{burdzy@uw.edu}
\thanks{Supported in part by Simons Foundation Grant 506732. }
\begin{document}

\begin{abstract}

We give a new upper bound $K_+$ on the number of totally elastic collisions of $n$ hard spheres with equal radii and equal masses in $R^d$. Our bound  satisfies $\log K_+ \leq c(d) n \log n$.

\end{abstract}

\maketitle

\section{Introduction}
\label{se:intro}

Consider a family of $n$ billiard balls in $\R^d$ reflecting from each other in a totally elastic way. We assume that their masses and radii are identical. Note that the ``billiard table'' has no walls---it is the whole space $\R^d$. We will prove the following upper bound for the number of collisions.

\begin{theorem}\label{j18.20}
The  number of collisions is bounded above by
\begin{align}\label{j19.2}
1600 \left( 1000 \cdot 32^{5^d}\right)^n  n^{((3/2)5^d+9/2) n +3/2} .
\end{align}
\end{theorem}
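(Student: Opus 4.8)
The plan is to recast the system as a single billiard trajectory in a high-dimensional configuration space and to bound the number of wall reflections. Identify a configuration of the $n$ balls with a point $q=(q_1,\dots,q_n)\in\R^{dn}$, where $q_i\in\R^d$ is the center of the $i$-th ball. Since all masses are equal, the kinetic energy is a multiple of the squared Euclidean norm of $\dot q$, so between collisions $q$ travels along a straight line at constant speed in $\R^{dn}$, and the non-overlap conditions $|q_i-q_j|\ge 2r$ cut out the admissible region $\calD\subset\R^{dn}$, which is the complement of the $\binom{n}{2}$ open round cylinders $C_{ij}=\{q:|q_i-q_j|<2r\}$. An elastic collision of balls $i$ and $j$ is exactly a mirror reflection $v\mapsto v-2\langle v,\nu_{ij}\rangle\nu_{ij}$ off $\partial C_{ij}$, whose unit normal $\nu_{ij}$ is supported on the $i$- and $j$-blocks and there equals $\tfrac1{\sqrt2}(u,-u)$ with $u=(q_i-q_j)/|q_i-q_j|$. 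Thus it suffices to bound the number of reflections of a straight-line billiard path in $\calD$.

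Each $C_{ij}$ is convex, so $\calD$ is a semi-dispersing billiard table: reflections occur off convex scatterers, trajectories defocus, and reflections cannot accumulate. The quantitative engine is the collection of angles between the walls. A direct computation gives $\langle\nu_{ij},\nu_{kl}\rangle=0$ when $\{i,j\}\cap\{k,l\}=\emptyset$, and $|\langle\nu_{ij},\nu_{jk}\rangle|=\tfrac12\bigl|\langle\tfrac{q_i-q_j}{|q_i-q_j|},\tfrac{q_j-q_k}{|q_j-q_k|}\rangle\bigr|\le\tfrac12$ when the pairs share one index; so any two collision normals make an angle at least $\arccos\tfrac12=\pi/3$. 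The essential fact I would extract is that, although this pairwise angle is a fixed constant, when up to $n$ balls interact at once the relevant corner of $\calD$ is bounded by many walls simultaneously and its effective angular width degrades to order $1/n$.

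The counting then rests on the semi-dispersing property, exploited through an unfolding/comparison argument: locally, a maximal run of reflections off cylinders that pairwise meet at angle $\ge\alpha$ behaves like a billiard in a spherical wedge of inradius $\gtrsim\alpha$, which admits only $O(1/\alpha)$ reflections per degree of freedom. Two inputs then drive the bound. First, convexity converts the degraded corner angle $\gtrsim 1/n$ into a polynomial-in-$n$ bound on the reflections before the trajectory escapes that corner; this is precisely the source of the $n^{cn}$ shape (equivalently the $\log n$ per unit in $\log K_+$). Second, to make the exponent scale like $5^d n$ rather than $\binom n2$, I would discretize collision directions by covering the sphere $S^{d-1}$ of the $(q_i-q_j)$-coordinate with $O(5^d)$ caps of a fixed angular radius, so that each collision carries one of $O(5^d)$ types and only $O(5^d n)$ active wall-types are charged at any scale. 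Combining the per-corner polynomial bound, the $O(5^d)$ type count, and an induction on $n$ that separates the dynamics into stretches during which the balls split into non-interacting sub-families, one assembles a product of the shape $\bigl(\mathrm{const}^{\,5^d}\bigr)^n\, n^{O(5^d)n}$ matching \eqref{j19.2}.

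The principal difficulty is the quantitative geometry of these deep corners: proving that even when of order $n$ cylinders constrain the trajectory at once, the number of reflections before escape is only polynomial in $n$, with the correct power $\approx(3/2)5^d+9/2$. This is exactly where the convexity of the scatterers must be turned into a sharp, non-asymptotic estimate, the step that separates a mere finiteness proof (in the spirit of Burago--Ferleger--Kononenko) from an explicit bound. Everything else—the cap covering giving $5^d$, the elementary wedge estimate, and the cluster induction supplying the extra factor $n$—is comparatively routine, but tracking every constant so that the pieces combine into precisely the prefactor $1600\,(1000\cdot 32^{5^d})^n$ and the exponent $((3/2)5^d+9/2)n+3/2$ is itself the bulk of the technical work.
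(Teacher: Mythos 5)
Your configuration-space picture (cylindrical scatterers in $\R^{dn}$, reflections as collisions, semi-dispersing geometry) is the correct backdrop, but it is the backdrop for the result of Burago--Ferleger--Kononenko that the paper imports as a black box, not the content of this theorem. Two essential ingredients of the actual argument are missing from your sketch, and the substitutes you propose do not work as stated. First, the factor $5^d$ does not come from covering the sphere of collision directions with caps and ``charging wall-types''; it comes from an elementary volumetric estimate: on a unit time interval the total speed bound $|\bv|=1$ forces any ball that collides with a given ball $B_1$ to lie, at the start of the interval, inside a ball of radius $5$ around the center of $B_1$, so at most $5^d$ balls can ever touch $B_1$ in that window and at most $5^d n/2$ \emph{pairs} are active. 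This is what lets one replace the exponent $n(n-1)$ in the BFK bound by $5^d n$, yielding at most $(32 n^{3/2})^{5^d n-2}$ collisions per unit time interval. Your cap-covering idea gives no control on which pairs can interact and no mechanism for reducing the exponent of the BFK estimate.

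Second, and more seriously, the ``cluster induction'' you invoke has no quantitative engine. The whole point of the proof is Theorem 5.1 of \cite{BuD18b}: the system of $n$ balls splits into two permanently non-interacting subfamilies outside the interval $\left[t_0-100n^3|\bx(t_0)|,\,t_0+100n^3|\bx(t_0)|\right]$, where $t_0$ is the time of closest approach of the configuration to the origin. This converts the per-unit-interval BFK bound into a bound over a window of controlled length $200 n^3|\bx(t_0)|$, and $|\bx(t_0)|$ is in turn controlled by the spatial diameter of the subfamily. Without such a bound on \emph{how long} one must wait before the family decouples, your induction on $n$ cannot be closed: the stretches ``during which the balls split into non-interacting sub-families'' could a priori be arbitrarily long and contain arbitrarily many collisions. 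The paper then needs a genuinely nontrivial branching construction (with up to $1000 n^{9/2}$ offspring per node, handling separately the case where the family is spatially spread out and must be cut at a sequence of intermediate times) to organize the recursion; the bound $1000^n n^{9n/2}$ on the size of this tree is the second source of the $n^{cn}$ growth. Finally, the ``principal difficulty'' you name --- a sharp polynomial bound on reflections in deep corners where order-$n$ cylinders meet --- is precisely the part you leave unresolved, and the paper does not resolve it either: it cites the BFK estimate. As written, your proposal is a plausible research program rather than a proof.
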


We will review the history of the problem in Section \ref{review}. Here we will discuss the question of optimality of our bound. Let $K_+'$ denote the best previously known upper bound for the number of collisions, stated below in  
\eqref{s26.1}. Let $K_-$ denote the best known lower bound in dimensions $d\geq 3$, stated below in \eqref{j19.1}. Let $K_+$ be the new upper bound given in \eqref{j19.2}. For $d\geq 3$, some constants $c_1, c_2$ and $c_3$ depending on  $d$, and large $n$,
\begin{align*}
 c_1 n \leq \log K_- < \log K_+\leq  c_2 n \log n < c_3 n^2 \log n \leq \log K_+'. 
\end{align*}
This shows that while there still remains a gap between the best lower and upper bounds $K_-$ and $K_+$, the gap is much smaller than between the previously known best upper bound $K_+'$ and the best known lower bound $K_-$.

The proof of Theorem \ref{j18.20} is based in an essential way on two results---one from \cite{BFK1} and another one from \cite{BuD18b}. The latter one shows that the family of all balls ``quickly'' splits into two non-interacting families. An estimate from \cite{BFK1} can be used to give an upper bound for the number of collisions on the initial (``short'') time interval. The  proof of  Theorem \ref{j18.20} is an inductive construction of a branching collection of ball subfamilies. Subfamilies that are leaves in the branching structure have ``lifetimes''  short enough so that the estimate mentioned above can be applied. Finally, we estimate the number of subfamilies in the branching structure. 

There are two sources of the factor of the form $n^{cn}$ in \eqref{j19.2}. One of them is a bound adopted from \cite{BFK1}. The other one is a branching construction in the present article. Hence, there is little hope for a significant improvement of the upper bound by fine-tuning the argument given in this paper. 

We will review the history of the problem in Section \ref{review}. The notation and assumptions will be presented in Section \ref{se:notation}.
The proof of Theorem \ref{j18.20} will be given in Section \ref{se:proof}.

\section{Hard ball collisions---historical review}
\label{review}

The question of whether a finite system of hard balls can have an infinite number of elastic collisions was posed by Ya.~Sinai. It was answered in negative in \cite{Vaser79}. For alternative proofs see \cite{Illner89, Illner90,IllnerChen,BuD18b}. 
It was proved in \cite{BFK1} that a system of $n$ balls in the Euclidean space undergoing elastic collisions can experience at most
\begin{align}\label{s26.1}
\left( 32 \sqrt{\frac{m_{\text{max}}}{m_{\text{min}}} } 
\frac{r_{\text{max}}}{r_{\text{min}}} n^{3/2}\right)^{n^2}
\end{align}
collisions. Here $m_{\text{max}}$ and $m_{\text{min}}$ denote the maximum and the minimum masses of the balls. Likewise, $r_{\text{max}}$ and $r_{\text{min}}$ denote the maximum and the minimum radii of the balls.
The following alternative upper bound for the maximum number of collisions appeared in \cite{BFK5}
\begin{align}\label{s26.2}
\left( 400 \frac{m_{\text{max}}}{m_{\text{min}}} 
 n^2\right)^{2n^4}.
\end{align}
  The papers \cite{BFK1,BFK2, BFK3,BFK4, BFK5} were the first to present universal bounds \eqref{s26.1}-\eqref{s26.2} on the number of collisions of $n$ hard balls in any dimension. No improved universal upper bounds were found since then, as far as we know.

It has been proved in \cite{BD} by example that the  number of elastic collisions of $n$ balls
in $d$-dimensional space is greater than $n^3/27$ for $n\geq 3$ and $d\geq 2$, for some initial conditions. The previously known lower bound was of order $n^2$ (that bound was for balls in dimension 1 and was totally elementary). The lower bound estimate was improved  in \cite{BuI18} to 
\begin{align}\label{j19.1}
2^{\lfloor n/2\rfloor}
\end{align}
 in dimensions $d\geq 3$.  

In a somewhat different direction, it has been shown in \cite{Serre} that no more than $O(n^2)$  collisions change the velocities of balls in a significant way.
 
\section{Assumptions and notation}
\label{se:notation}

We will consider $n\geq 3$ hard balls in $\R^d$, for $d\geq 2$, colliding elastically, on the time interval $(-\infty, \infty)$.
We will assume that the balls have equal masses and their radii are 1.

The center and velocity of the $k$-th ball will be denoted $x^k(t)$ and $v^k(t)$, for $k=1,2,\dots,n$. We will say that the $j$-th and $k$-th balls collide at time $t$ if $|x^j(t) - x^k(t)| = 2$ and their velocities change at this time.
The velocities are constant between collision times. 
We will write $\bx(t) = (x^1(t), \dots, x^n(t)) \in \R^{dn}$
and
$\bv(t) = (v^1(t), \dots, v^n(t))\in\R^{dn}$. Note that $\bv(t)$ is well defined only when $t$ is not a collision time, but both $\bv(t-)$ and $\bv(t+)$ are well defined for all times. 

Recall that all balls have the same mass. This implies that the velocities change at the moment of collision as follows. Suppose that the $j$-th and $k$-th balls collide at time $t$. 
This implies that the velocities $v^j(t-) $ and $ v^k(t-)$ (i.e., the velocities just before the collision) satisfy
\begin{align}\label{oc4.3}
(v^j(t-) - v^k(t-)) \cdot (x^j(t) - x^k(t)) < 0.
\end{align}
Let $x^{jk}(t) = (x^j(t) - x^k(t))/|x^j(t) - x^k(t)|$.
Then the velocities just after the collision are given by
\begin{align}\label{oc2.3}
v^j(t+) &= v^j(t-) + (v^k(t-) \cdot x^{jk}(t)) x^{jk}(t)
- (v^j(t-) \cdot x^{jk}(t)) x^{jk}(t),\\
v^k(t+) &= v^k(t-) + (v^j(t-) \cdot x^{jk}(t)) x^{jk}(t)
- (v^k(t-) \cdot x^{jk}(t)) x^{jk}(t).\label{oc4.2}
\end{align}
In other words, the balls exchange the components of their velocities that are parallel to the line through their centers at the moment of impact. The orthogonal components of velocities remain unchanged. 

Consider the following assumptions.

(A1)
The balls have equal masses and all radii are equal to 1.

(A2)
We will assume that there are no simultaneous collisions. 
 It is known that the set of vectors in the phase space of positions and velocities that lead to simultaneous collisions has  measure zero (see \cite{Alex76}). It has been proved in \cite[Thm.~4]{IllnerChen} that there are no accumulation points for collision times.

(A3)  We will assume  that the momentum of the system is zero, i.e., $\sum_{j=1}^n v^j(t) =0$ for all $t$. We can make this assumption because the number of collisions is the same in all inertial frames of reference. Since the total momentum is zero, the center of mass of all balls is constant, so it can  be assumed to be at the origin. This, together with the fact that all masses are equal, implies that $\sum_{j=1}^n x^j(t) =0$.  

(A4) We will assume without loss of generality that the total  ``energy'' is equal to 1, i.e., $|\bv(t+)|^2 =1$ for all $t$.  If the initial energy is not zero then we can multiply all velocity vectors  by the same scalar constant so that the energy is equal to 1.  If all velocities are changed by the same multiplicative constant then the balls will follow the same trajectories at a different rate and hence there will be the same total number of collisions.

\begin{remark}
\label{re:zero}

(i) The problem of the number of collisions is invariant under time shifts.

(ii)
We recall \cite[Rem. 4.3]{BuD18b}. 
Let $\alpha(t)= \angle(\bx(t),\bv(t+))$. There is a unique $t_0\in\R$ such that $\alpha(t) >\pi/2$ for $t<t_0$, and $\alpha(t)<\pi/2$ for $t>t_0$. Right continuity yields $\alpha(t_0)\leq\pi/2$.  We have 
\begin{align}\label{j16.2}
|\bx(t) | \geq |\bx(t_0)|.
\end{align}
for all $t\in\R$.
\end{remark}

\begin{theorem}\label{s28.1} 
(\cite[Thm. 5.1]{BuD18b})
Recall assumptions (A1)-(A4) and time $t_0$ defined in Remark \ref{re:zero}.
 The family of $n$ balls can be partitioned into two non-empty subfamilies such that no ball from the first family collides with a ball in the second family in the time interval $\left[t_0+100 n^3|\bx(t_0)|,\infty\right)$. By symmetry and time reversal, a similar claim applies to $\left(-\infty,t_0-100 n^3|\bx(t_0)|\right]$.
\end{theorem}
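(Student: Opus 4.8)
The plan is to exploit the convexity of $I(t):=|\bx(t)|^2=\sum_j|x^j(t)|^2$ to control the escape of the configuration, extract an asymptotic escape direction, and then split the balls according to that direction, tuning the constant $100n^3$ to the rate at which the direction stabilizes. \textbf{First} I would show that $I$ is convex in $t$ with minimum at $t_0$. Between collisions $\ddot I=2|\bv|^2=2$, while at a collision of balls $j,k$ the formulas \eqref{oc2.3}--\eqref{oc4.2} give that $\bx\cdot\bv$ jumps by $2\,\bigl(v^k(t-)-v^j(t-)\bigr)\cdot x^{jk}(t)$, which is positive by \eqref{oc4.3}. Hence $\dot I=2\,\bx\cdot\bv$ only increases at collisions, so $I$ is convex; combined with $\dot I(t_0+)\ge 0$ this yields $|\bx(t)|^2\ge R^2+(t-t_0)^2$ with $R:=|\bx(t_0)|$, so at $t_1:=t_0+100n^3R$ the configuration has escaped to $|\bx(t_1)|\ge 100n^3R$. \textbf{Next}, writing $\bu(t)=\bx(t)/|\bx(t)|$ and $\alpha(t)=\angle(\bx,\bv)$, I would use $\cos\alpha=(\bx\cdot\bv)/|\bx|\ge (t-t_0)/|\bx|$ together with $|\bx(t)|\le R+(t-t_0)$ to get $\sin^2\alpha\lesssim R/(t-t_0)$. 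Since $|\dot\bu|=\sin\alpha/|\bx|\lesssim\sqrt R\,(t-t_0)^{-3/2}$ is integrable, $\bu(t)$ converges to a limit $\bu_\infty=(u^1_\infty,\dots,u^n_\infty)$ with the explicit tail estimate $|\bu(t)-\bu_\infty|\le c\sqrt{R/(t-t_0)}$; by (A3) the limit satisfies $\sum_j u^j_\infty=0$ and $\sum_j|u^j_\infty|^2=1$, so the vectors $u^j_\infty\in\R^d$ are not all equal.

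\textbf{The partition} comes from clustering the $u^j_\infty$. Because their mean is $0$ and $\sum_j|u^j_\infty|^2=1$, their diameter is at least $n^{-1/2}$ (the vector of maximal norm has norm $\ge n^{-1/2}$, and some other vector has nonpositive inner product with it). Deleting the longest edge of a minimum spanning tree of the $n$ points then splits $\{1,\dots,n\}$ into two non-empty families $A,B$ whose single-linkage separation is at least $\beta:=\tfrac{1}{\sqrt n\,(n-1)}\gtrsim n^{-3/2}$, so $|u^j_\infty-u^k_\infty|\ge\beta$ whenever $j\in A$ and $k\in B$.

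\textbf{To rule out cross-collisions} on $[t_1,\infty)$, I would estimate, for $j\in A$, $k\in B$ and $t\ge t_1$, using $x^i(t)=|\bx(t)|\,u^i(t)$ with $u^i(t)=x^i(t)/|\bx(t)|$,
\[
|x^j(t)-x^k(t)| \ge |\bx(t)|\bigl(|u^j_\infty-u^k_\infty|-2\,|\bu(t)-\bu_\infty|\bigr) \ge |\bx(t)|\bigl(\beta-2c\sqrt{R/(t-t_0)}\bigr).
\]
The constant $100n^3$ is chosen precisely so that at $t=t_1$ the residual $2c\sqrt{R/(t-t_0)}=2c/(10\,n^{3/2})$ falls below $\beta$, and since this residual decreases in $t$ the inequality persists for all $t\ge t_1$. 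As $|\bx(t)|\ge 100n^3R$ and $R\ge\sqrt{2(n-1)}$ (from the identity $\sum_j|x^j|^2=\tfrac1n\sum_{j<k}|x^j-x^k|^2\ge 2(n-1)$ valid when $\sum_j x^j=0$, together with $|x^j-x^k|\ge 2$), the right-hand side exceeds $2$. Thus no ball of $A$ is ever in contact with a ball of $B$ for $t\ge t_1$, which is the assertion; time reversal gives the symmetric statement on $(-\infty,t_0-100n^3R]$.

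\textbf{The main obstacle} is the quantitative interplay between the second and fourth steps: one must establish the convergence rate $|\bu(t)-\bu_\infty|\le c\sqrt{R/(t-t_0)}$ rigorously (integrating the almost-everywhere bound on $|\dot\bu|$ across collisions, which is justified because $\bx$, and hence $\bu$, is continuous), and then verify that the clustering gap $\beta\gtrsim n^{-3/2}$ genuinely dominates twice this residual exactly when $t-t_0\ge 100n^3R$. Because both the separation gap and the residual wobble of the escape direction scale like $n^{-3/2}$, the statement is a tight race decided by the numerical constants, so every constant hidden in the $\lesssim$'s must be tracked honestly for the choice $100n^3$ to win uniformly in $n\ge 3$.
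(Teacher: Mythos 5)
This theorem is imported into the paper as a citation of \cite[Thm.~5.1]{BuD18b}; the paper itself contains no proof of it, so there is nothing internal to compare your argument against. Judged on its own, your proposal is essentially a correct and complete-in-outline proof, built from the same ingredients that \cite{BuD18b} uses (monotonicity of $\bx\cdot\bv$ across collisions, hence convexity of $|\bx|^2$ and the quadratic growth $|\bx(t)|^2\ge|\bx(t_0)|^2+(t-t_0)^2$, followed by a chain/pigeonhole decomposition); your specific route through the convergence of $\bu(t)=\bx(t)/|\bx(t)|$ and clustering of the limits $u^j_\infty$ is a clean way to obtain a single time-independent partition. I checked the two points you flag as the ``tight race'': the tail constant comes out to $c=2\sqrt3$ (from $\sin^2\alpha\le 3R/(t-t_0)$ and $|\bx(t)|\ge t-t_0$ for $t-t_0\ge R$), so the residual at $t_1$ is $4\sqrt3/(10 n^{3/2})\approx 0.69\,n^{-3/2}$, which is indeed beaten by $\beta\ge n^{-3/2}$, and the final separation is $\ge 30\,n^{3/2}R\gg 2$ using $R\ge\sqrt{2(n-1)}$; so the race is won with ample margin, not by a hair. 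Two small repairs: the diameter bound on the $u^j_\infty$ gives only $\operatorname{diam}\ge n^{-1/2}$, so to get cross-separation $\ge\beta$ from the spanning-tree argument you should either take $\beta$ strictly below $1/(\sqrt n(n-1))$ or use the minimum-spanning-tree formulation carefully (the deleted longest edge realizes the minimum cross-distance, so ``$\ge$'' suffices there); and the justification that $\bu$ is absolutely continuous should invoke (A2) and the non-accumulation of collision times from \cite{IllnerChen} so that the pointwise bound on $|\dot\bu|$ can legitimately be integrated. Neither affects the substance.
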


\section{Proof of the main theorem}\label{se:proof}

\begin{proof}[Proof of Theorem \ref{j18.20}]
\emph{Step 1}. 
Consider a family of $n$ balls satisfying assumptions (A1)-(A4).
Recall $t_0 \in \R$ defined in Remark \ref{re:zero}.

Let  $[s_1,s_2]$ be the smallest interval containing $t_0$ satisfying the following two conditions.

(i) The balls  can be partitioned into two non-empty subfamilies such that no ball from the first family collides with a ball in the second family in the time interval  $(-\infty,s_1)$.

(ii)  The balls can be partitioned into two non-empty subfamilies such that no ball from the first family collides with a ball in the second family in the time interval  $(s_2,\infty)$.

The division into subfamilies in (i) and (ii) is not unique.
The subfamilies  in (i) need not be the same as those in (ii).

By \eqref{j16.2} and Theorem \ref{s28.1}, for any $t\in \R$,
\begin{align*}
s_2 - t_0 &\leq 100 n^3 |\bx(t_0)|
\leq 100 n^3 |\bx(t)|,\\
t_0-s_1 &\leq 100 n^3 |\bx(t_0)|
\leq 100 n^3 |\bx(t)|,
\end{align*}
so, for any $t\in \R$,
\begin{align}\label{j17.2}
s_2-s_1 
\leq  200 n^3 |\bx(t)|.
\end{align}

\medskip
\emph{Step 2}.
Fix a time interval $[u,u+1]$.
Since $|\bv(t+)|=1$ for all $t$, we have $|v^i(t)|\leq 1$ for all $i$ and $t$.  Hence, a ball can travel at most distance 1 in the time interval $[u,u+1]$. Suppose  the center of a ball $B_1$ is at $y$ at time $u$ and $B_1$ collides with another ball $B_2$ at a time $u_1\in[u,u+1]$. The distance from the center of $B_1$ to $y$ is at most 1 at time $u_1$ so the  distance from the center of $B_2$ to $y$ is at most 3 at the same time. It follows that distance from the center of $B_2$ to $y$ is at most 4 at time $u$. This implies that $B_2$ is a subset of the ball $\calB$ (not a billiard ball) centered at $y$ with radius 5 at time $u$. The volume of $\calB$  is $5^d$ times the volume of a ball with radius 1. Hence, $B_1$ might have collided with at most $5^d$ balls during  the  time interval $[u,u+1]$.
It follows that the number of pairs of balls that could have collided in $[u,u+1]$ is bounded by $5^d n/2$ (the factor $1/2$ is present so as not to count pairs twice). Note that $5^d n/2$ is the number of pairs of balls that could have collided in $[u,u+1]$, not the number of collisions, which could be much higher. We need an estimate of the number of pairs of balls because we want to use the results of \cite{BFK1}. In that paper, counting of collisions is based on the number of ``walls $B_i$'' (in the notation of that paper), which is  equal to the number of pairs of balls that could collide.

We will now apply an upper bound on the number of collisions given in \cite{BFK1}. In the notation of \cite{BFK1}, $r_{\text{max}}=1$ and $m_{\text{max}}=1$ because (i) we have assumed that the balls have radii 1, and
(ii) we have  assumed that  the masses of all balls are equal, so we can make them all equal to 1 without losing generality. 
In view of these remarks, the bound given on the left hand side of the last displayed formula on page 707 of \cite{BFK1} is 
$(8(2 n \sqrt{n}))^{n(n-1) -2}$. 
There seems to be a mistake here, in view of Remark 5.3 in \cite{BFK1}. The correct version should be $(8(2 n \sqrt{n} +2))^{n(n-1) -2}$.
Here $n(n-1) $ is twice the number of pairs of balls. 
Since only $5^dn/2$ pairs of balls can collide in $[u,u+1]$,
we replace $n(n-1) $ with $5^d n$ to see that the number of collisions of $n$ balls during an interval $[u,u+1]$ is bounded by 
\begin{align}\label{j17.1}
(8(2 n \sqrt{n}+2))^{5^d n -2}<(32 n^{3/2})^{5^d n -2}.
\end{align}
This bound agrees with Corollary 1.1 in \cite{BFK5} but our bound is more explicit.

We now offer a more formal justification of the bound in \eqref{j17.1}. Let $\calA$ be the family of all pairs of balls that collide in $[u,u+1]$. Consider a billiards evolution in which (i) pairs of balls in $\calA$ move along the same trajectories as in the original evolution in $[u,u+1]$, (ii) the trajectories of pairs of balls in $\calA$ are extended outside $[u,u+1]$ in the usual way, i.e., with elastic collisions, and (iii) pairs of balls that do not belong to $\calA$ do not collide, i.e., they pass through each other like ghosts. The results of \cite{BFK1} apply to this model with the number of ``walls $B_i$'' (in the notation of that paper)  equal to the number of pairs in $\calA$.

\medskip
\emph{Step 3}. 
By \eqref{j17.2} and \eqref{j17.1}, the number of collisions on the interval $[s_1,s_2]$ is bounded by 
\begin{align}\label{j17.3}
200 n^3 |\bx(t)| \left(32 n^{3/2}\right)^{5^d n -2},
\end{align}
for any $t\in \R$.
This bound is based on Theorem \ref{s28.1} proved under the assumptions (A1)-(A4). We will argue that \eqref{j17.3} holds even if (A3) and (A4) are not satisfied.

First, we will argue that \eqref{j17.3} holds even if (A4) does not hold.
Suppose that $\bx$ and $\bv$ do not satisfy  (A4).
Let $c_1=1/|\bv(0+)|$ and $\wt \bv(t+) = c_1 \bv(t+)$ for all $t$. Then
$|\wt \bv(t+)|=1$ for all $t$. We keep the same position at time $t_0$, i.e., $\wt \bx(t_0) = \bx(t_0)$. The balls will follow the same trajectories but at a different speed. Hence, $\inf_{t\in \R} |\bx(t)| = \inf_{t\in \R} |\wt \bx(t)| = |\wt \bx(t_0|$.
Let $[\wt s_1,\wt s_2]$ be defined as in Step 1 relative to $\wt \bx$. Then the number of collisions in $[s_1,s_2]$ in the system characterized by $\bx$ and $\bv$ is the same as the number of collisions in $[\wt s_1,\wt s_2]$ in the system characterized by $\wt\bx$ and $\wt\bv$. Since \eqref{j17.3} holds for the latter evolution, it also holds for the former.

Next we will argue that we do not need to assume  (A3) and (A4) for \eqref{j17.3} to hold.
Suppose that $\bx$ and $\bv$ do not necessarily satisfy (A3) and (A4), and recall $\wt\bx$ and $\wt\bv$ defined in the previous paragraph. For some $\bz\in\R^d$,
we have $\sum_{j=1}^n \wt x^j(t) =\sum_{j=1}^n \wt x^j(0) + t \bz $ for  all $t$.
Let $\bz_1(t) =  \frac 1 n \sum_{j=1}^n \wt x^j(0) + t \bz/n$,
$\wh x^j(t) = \wt x^j(t) - \bz_1( t)$
and $\wh v^j(t) = \wt v^j(t) - \bz/n$ for all $j=1,\dots,n$ and $t\in\R$.
The pair $\wh \bx$ and $\wh \bv$ is a representation of 
the dynamical system defined by $\wt \bx$ and $\wt \bv$ in a different inertial frame of reference. 
We have $\sum_{j=1}^n \wh x^j(t) =0$ for all $t$, so the functions $\wh \bx$ and $\wh \bv$ characterize an evolution satisfying (A3). This and the previous paragraph show that  \eqref{j17.3} holds with $\wh \bx$ in place of $\bx$.
A standard calculation shows that the function $a \to \sum_{j=1}^n |\wh x^j(t) - a \bz_1( t)|^2$ achieves the maximum at $a=0$ because $\sum_{j=1}^n \wh x^j(t) =0$. This implies that
 $|\wt \bx(t)| \geq |\wh \bx(t)|$ for every $t$, and, therefore, $\inf_{t\in \R} |\bx(t)| = \inf_{t\in \R} |\wt \bx(t)|\geq \inf_{t\in \R} |\wh \bx(t)|$. This completes the proof that \eqref{j17.3} holds even if (A3) and (A4) are not satisfied.

\medskip
\emph{Step 4}. 
Consider any subfamily $\calF $  of the balls. Suppose that $T_1(\calF) <T_2(\calF) $ are given and balls in $\calF$ do not collide with any balls outside of $\calF$ on the time interval $(T_1(\calF) ,T_2(\calF)) $. 
We will define $r(\calF),t_*(\calF), U_1(\calF)$ and $U_2(\calF) $. These  numbers depend not only on $\calF$, as indicated by the notation, but also on 
$T_1(\calF) $ and $T_2(\calF) $. Hopefully, our notation, chosen for typographical convenience, will not cause confusion. 

Let $n_\calF$ be the number of balls in $\calF$ and suppose that the indices of balls in $\calF$ are $i_1,\dots, i_{n_\calF}$.
Choose an inertial coordinate system $CS_\calF$ such that if $\wt x^{i_k}(t)$ is the position of the $i_k$-th ball $CS_\calF$ then $\sum_{k=1}^{n_\calF} \wt x^{i_k}(t)=0$ for all $t \in [T_1(\calF) ,T_2(\calF)] $.
Let
\begin{align}\notag
\bx_\calF(t) &= (\wt x^{i_1}(t), \dots, \wt x^{i_{n_\calF}}(t)),\\
r(\calF,t)& = \max_{1\leq j,k\leq n_\calF}
 |\wt x^{i_j}(t)-\wt x^{i_k}(t)|,\label{n11.1}\\
r(\calF) &= \inf_{t\in [T_1(\calF) ,T_2(\calF)]}
r(\calF,t),\label{n11.2}\\
t_*(\calF)&= \argmin_{t\in [T_1(\calF) ,T_2(\calF)]} r(\calF,t),\notag\\
|\bv_\calF|&= \left(\sum_{k=1}^{n_\calF} \left(v^{i_k}(t)\right)^2\right)^{1/2},
\qquad t\in [T_1(\calF) ,T_2(\calF)] .\notag
\end{align}

Since $\sum_{k=1}^{n_\calF} \wt x^{i_k}(t)=0$, the norm of the vector $\bx_\calF(t_*(\calF))$ is smaller in $CS_\calF$ than in any other coordinate system, for example, in a coordinate system with the origin at $\wt x^{i_1}(t_*(\calF))$. Hence,
\begin{align*}
 |\bx_\calF(t_*(\calF))|^2 \leq 
\sum_{k=2}^{n_\calF} |\wt x^{i_k}(t_*(\calF))-\wt x^{i_1}(t_*(\calF))|^2
\leq (n_\calF -1) r(\calF)^2,
\end{align*}
and, therefore,
\begin{align}\label{j18.12}
 |\bx_\calF(t_*(\calF))| \leq 
n_\calF^{1/2}  r(\calF).
\end{align}

Consider the following modified evolution of balls in $\calF$. Let the evolution of balls in $\calF$ remain as in the original system in the time interval $ [T_1(\calF) ,T_2(\calF)] $. Let the evolution continue before $T_1(\calF)$ and after $T_2(\calF) $, with balls in $\calF$ colliding according to the usual laws of elastic collisions, but with no collisions between balls in $\calF$ with balls outside $\calF$.
According to Remark \ref{re:zero} and \eqref{j16.2} there exists a unique $T_0(\calF) \in\R$ such that for all $t\in \R$,
\begin{align}\label{j17.4}
|\bx_\calF(t)| \geq |\bx_\calF(T_0(\calF))| .
\end{align}

Let  $[S_1(\calF),S_2(\calF)]$ be the smallest interval containing $T_0(\calF)$ satisfying the following two conditions.

(i) The family $\calF$  can be partitioned into two non-empty subfamilies $\calF_1$ and $\calF_2$ such that no ball in $\calF_1$ collides with a ball in $\calF_2$ in the time interval  $(-\infty,S_1(\calF))$.

(ii)  The family $\calF$ can be partitioned into two non-empty subfamilies $\calF_3$ and $\calF_4$ such that no ball in $\calF_3$ collides with a ball in $\calF_4$ in the time interval  $(S_2(\calF),\infty)$.

The division into subfamilies in (i) and (ii) is not unique. By Theorem \ref{s28.1}, $S_1(\calF)>-\infty$ and $S_2(\calF)<\infty$.
Let
\begin{align}\label{j17.6}
U_1(\calF) = \max(S_1(\calF), T_1(\calF)), 
\qquad U_2(\calF) = \min(S_2(\calF), T_2(\calF)).
\end{align}
It is possible that $U_1(\calF)=U_2(\calF) $.

We have $[U_1(\calF),U_2(\calF)]\subset[S_1(\calF),S_2(\calF)]$ so by
\eqref{j17.2} and rescaling by the speed $|\bv_\calF|$,
\begin{align}\label{j18.14}
U_2(\calF)-U_1(\calF) \leq 200 n_\calF^3 |\bx_\calF(t_*(\calF))|/|\bv_\calF|.
\end{align}

By \eqref{j17.3}, the number of collisions between balls in $\calF$ in the  time interval $[U_1(\calF),U_2(\calF)]$ is bounded by
\begin{align}\label{j17.5}
200 n_\calF^3 |\bx_\calF(t_*(\calF))| \left(32 n_\calF^{3/2}\right)^{5^d n_\calF -2}.
\end{align}

\medskip
\emph{Step 5}. 
We will construct a branching family $\calW$ with elements of  the form  
\begin{align*}
\Lambda(\calF) :=
(\calF, r(\calF), T_1(\calF) ,T_2(\calF)  , U_1(\calF) ,U_2(\calF) ),
\end{align*}
where $\calF$ is a subfamily of the balls.

Let $\calG_1$ be the set of all $n$ balls.
We initiate the construction of $\calW$ by declaring
$(\calG_1, r(\calG_1), -\infty , \infty,U_1(\calG_1),U_2(\calG_1))$ to be the only ``individual'' in the first generation of the branching structure  $\calW$. In other words, $T_1(\calG_1)=-\infty$, $T_2(\calG_1)=\infty$, and $U_1(\calG_1)$ and $U_2(\calG_1)$ are defined as in \eqref{j17.6} with $\calG_1$ in place of $\calF$.

We will now describe the branching mechanism. Suppose that
\begin{align}\label{j17.7}
(\calF, r(\calF), T_1(\calF) ,T_2(\calF)  , U_1(\calF) ,U_2(\calF) )
\in \calW.
\end{align}

We always have $ [U_1(\calF),U_2(\calF)] \subset [T_1(\calF),T_2(\calF)]$.
If $ [U_1(\calF),U_2(\calF)] = [T_1(\calF),T_2(\calF)]$ then we declare 
the sextuplet in \eqref{j17.7} to be a leaf of the branching tree, i.e., this sextuplet has no offspring. We also declare 
the sextuplet in \eqref{j17.7} to be a leaf if $n_\calF\leq 2$.

Suppose that $ [U_1(\calF),U_2(\calF)] \neq [T_1(\calF),T_2(\calF)]$ and $n_\calF\geq 3$.
Recall families $\calF_1, \calF_2, \calF_3$ and $\calF_4$ defined in conditions (i) and (ii) below \eqref{j17.4}.
Let 
\begin{align*}
T_1(\calF_1) &=T_1(\calF_2) = T_1(\calF),\qquad
T_2(\calF_1) =T_2(\calF_2) = U_1(\calF),\\
T_1(\calF_3) &=T_1(\calF_4) = U_2(\calF),\qquad
T_2(\calF_3) =T_2(\calF_4) = T_2(\calF).
\end{align*}
We declare the following four sextuplets to be (some of the) offspring of the sextuplet in \eqref{j17.7},
\begin{align}\label{j18.1}
&(\calF_k, r(\calF_k), T_1(\calF_k) ,T_2(\calF_k)  , U_1(\calF_k) ,U_2(\calF_k) ),\qquad k=1,2,3,4.
\end{align}
Here $ U_1(\calF_k)$ and $U_2(\calF_k) $ are defined as in \eqref{j17.6} with $\calF_k$ in place of $\calF$.

It is easy to check that if $\calF_5 = \calF$, $T_1(\calF_5)= U_1(\calF)$ and $T_2(\calF_5)= U_2(\calF)$ then $r(\calF_5)\leq r(\calF)$, $U_1(\calF_5)= U_1(\calF)$ and $U_2(\calF_5)= U_2(\calF)$.

If $r(\calF) \leq 4 n_\calF$ then we declare that the sextuplet in \eqref{j17.7} has five offspring---the four offspring listed in \eqref{j18.1} and 
\begin{align}\label{j18.5}
&(\calF_5, r(\calF_5), U_1(\calF) ,U_2(\calF)  , U_1(\calF) ,U_2(\calF) ).
\end{align}
This case is illustrated in Fig.~\ref{fig1}.
\begin{figure} \includegraphics[width=0.9\linewidth]{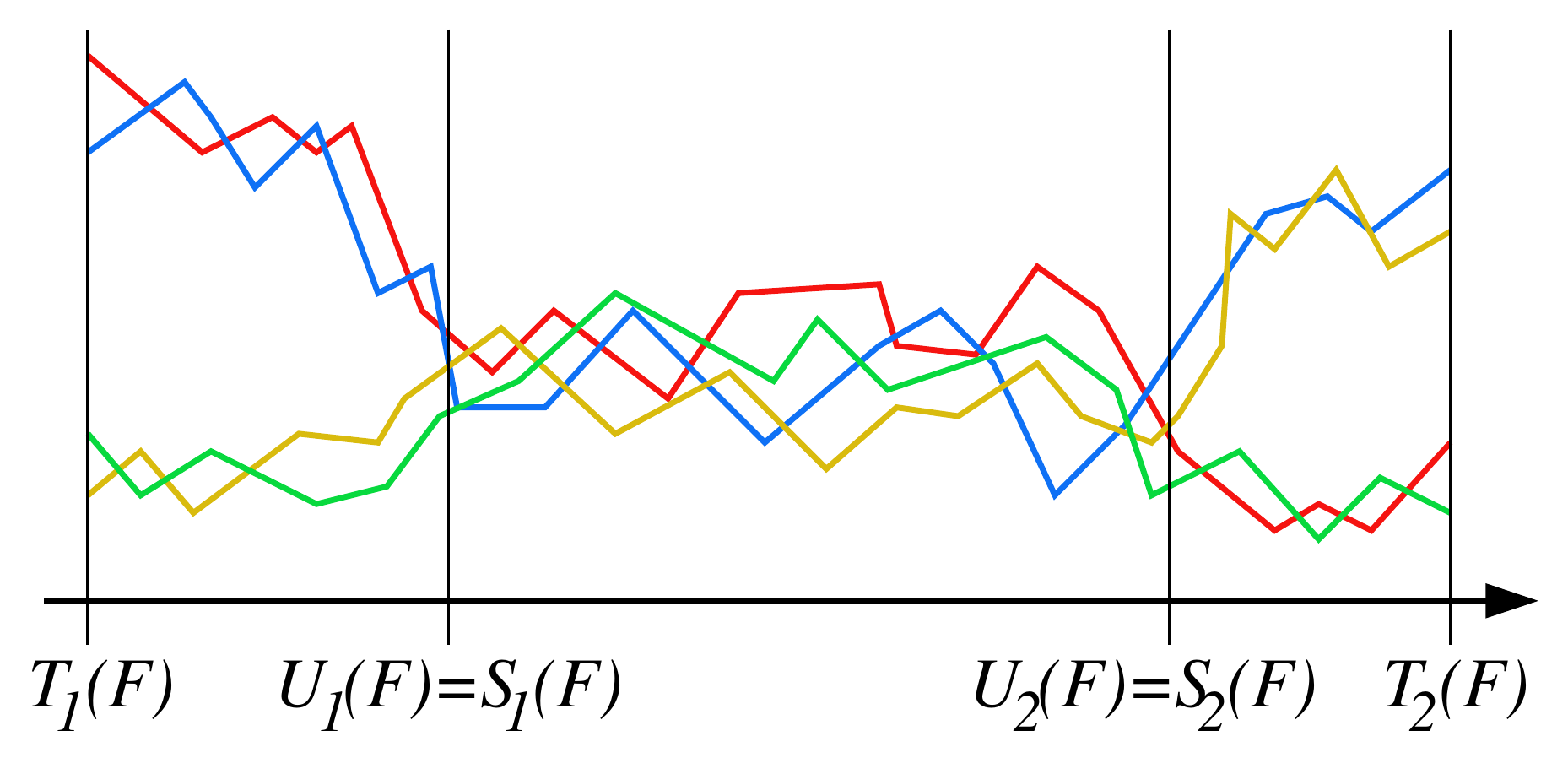}
\caption{
 Schematic drawing of a branching event with five offspring. The parent family $\calF$ is represented by all trajectories on the interval  $(T_1(\calF) ,T_2(\calF))$. In this generic case, $(U_1(\calF) ,U_2(\calF))$ lies strictly inside $(T_1(\calF) ,T_2(\calF))$. All trajectories are close to each other (at least at one time) in $(U_1(\calF) ,U_2(\calF))$. The five offspring consist of two non-interacting families to the left of $U_1(\calF)=S_1(\calF)$, two non-interacting families to the right of $U_2(\calF)=S_2(\calF)$, and the original family $\calF$ restricted to  the interval $(U_1(\calF) ,U_2(\calF))$. The latter offspring will not have any descendants---it is a leaf in the branching structure.
}
\label{fig1}
\end{figure}

Next we will discuss the case when $r(\calF) > 4 n_\calF$. In this case, all sextuplets listed  in \eqref{j18.1} will be declared to be offspring of the sextuplet in \eqref{j17.7} but there will be  more offspring constructed as follows.

Let $t_1= U_1(\calF)$ and  
\begin{align}\label{j18.10}
\beta = (r(\calF) - 2 n_\calF)/(n_\calF -1).
\end{align}
We will argue that $\calF$ can be partitioned into nonempty disjoint families $\calH^1_1$ and $\calH^1_2$ such that the distance between any ball in $\calH^1_1$ and  any ball in $\calH^1_2$ is greater than $\beta$ at time $t_1$. If this is not the case then every two balls in $\calF$ are connected by a chain of balls with distances between consecutive balls less than or equal to $\beta$. Hence, the distance between the centers of endpoint balls in the chain is bounded by 
\begin{align*}
(n_\calF-1) (\beta + 2)= r(\calF) - 2 n_\calF + 2 (n_\calF-1) = r(\calF) -2.
\end{align*}
This contradicts the definitions 
\eqref{n11.1}-\eqref{n11.2} of $r(\calF,t)$ and $r(\calF)$ because according to these definitions, there must exist balls whose centers are at a distance equal to or greater than $r(\calF)$ for every $t\in [T_1(\calF) ,T_2(\calF)]$.
We conclude that families $\calH^1_1$ and $\calH^1_2$ exist.

The velocities of  balls in $\calF$ are bounded by $|\bv_\calF|$ so no ball in $\calH^1_1$ can collide with any ball in $\calH^1_2$ in the time interval $[t_1,  t_1 + \beta/|\bv_\calF|]$.

Let 
\begin{align}\label{j18.11}
k_* &= \lceil(U_2(\calF) - U_1(\calF))|\bv_\calF|/\beta\rceil , \\
t_k &= t_1 + (k-1)\beta/|\bv_\calF|, \qquad k =2,\dots,k_* ,\notag \\
t_{k_*+1} &= U_2(\calF).\notag
\end{align}
For every $k=2,\dots, k_*$, we can find nonempty disjoint families $\calH^k_1$ and $\calH^k_2$ such that the distance between any ball in $\calH^k_1$ and  any ball in $\calH^k_2$ is greater than $\beta$ at time $t_k$. No ball in $\calH^k_1$ can collide with any ball in $\calH^k_2$ in the time interval $[t_k,t_{k+1}]$.

We declare the following sextuplets to be offspring of the sextuplet in \eqref{j17.7},
\begin{align}\label{j18.6}
(\calH^k_i, r(\calH^k_i), t_k ,t_{k+1} , U_1(\calH^k_i) ,U_2(\calH^k_i) ),
\end{align}
for $k=1,\dots, k_*$ and $i=1,2$.
Hence,
in the case when $r(\calF) > 4 n_\calF$,  sextuplets listed  in \eqref{j18.1} and \eqref{j18.6} are offspring of the sextuplet in \eqref{j17.7}. This case is illustrated in Fig.~\ref{fig2}.
\begin{figure} \includegraphics[width=0.9\linewidth]{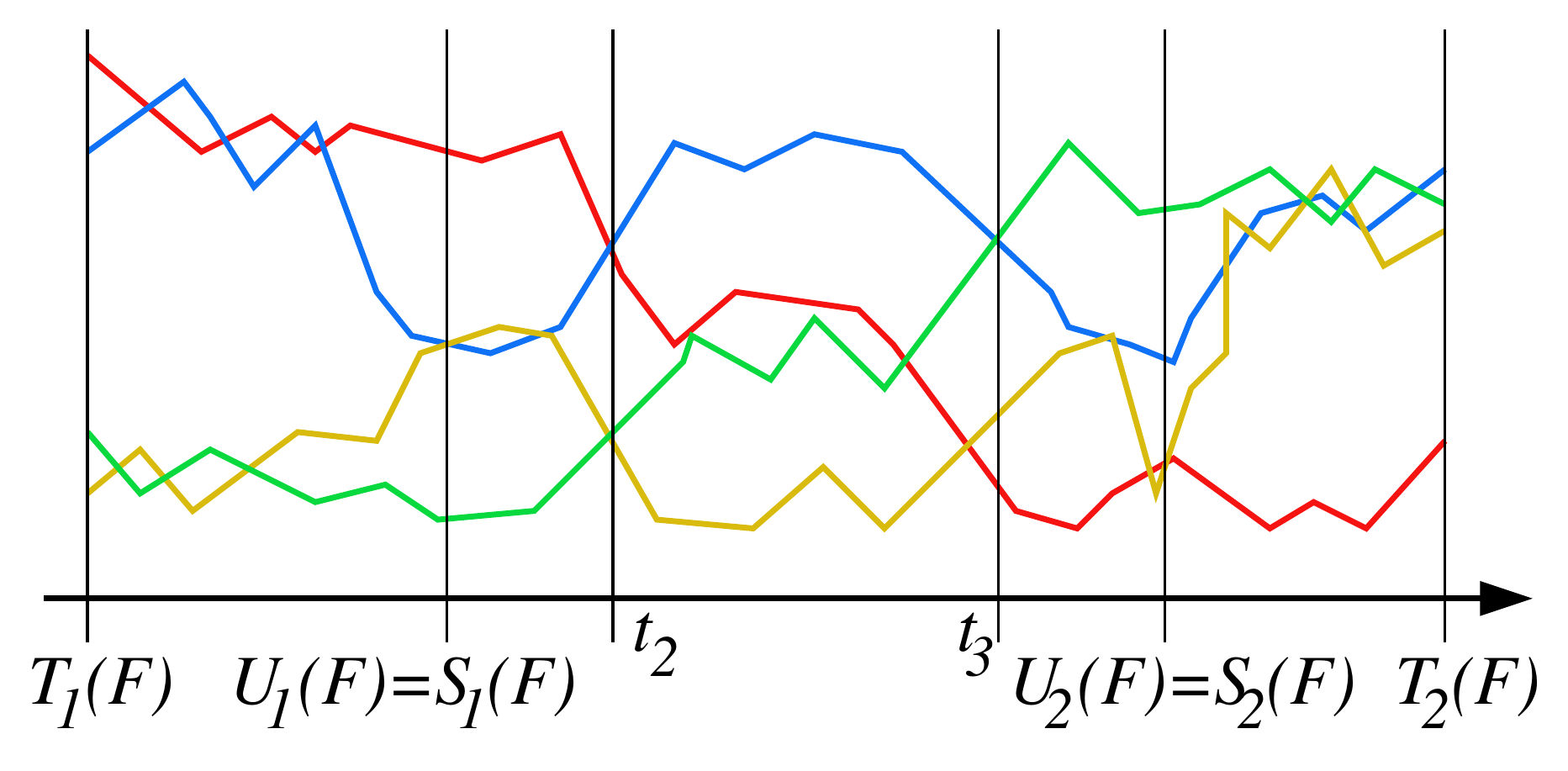}
\caption{
 Schematic drawing of a branching event with more than five offspring. The parent family $\calF$ is represented by all trajectories on the interval  $(T_1(\calF) ,T_2(\calF))$. In this generic case, $(U_1(\calF) ,U_2(\calF))$ lies strictly inside $(T_1(\calF) ,T_2(\calF))$. There is no time  in $(U_1(\calF) ,U_2(\calF))$ such that all trajectories are close to each other. There are ten offspring. Four of these consist of two non-interacting families to the left of $U_1(\calF)=S_1(\calF)$ and two non-interacting families to the right of $U_2(\calF)=S_2(\calF)$. On each of the intervals  $(U_1(\calF) ,t_2)$, $(t_2,t_3)$ and $(t_3 ,U_2(\calF))$ there are at least two non-interacting families of trajectories. On each of these intervals, two non-interacting families are chosen and declared to be offspring of $\calF$.
}
\label{fig2}
\end{figure}

\medskip
\emph{Step 6}.
We will estimate some quantities characterizing  $\calW$.
We will write $\Lambda(\calG) \prec \Lambda(\calF)$ to indicate that $\Lambda(\calG)$ is an offspring of $\Lambda(\calF)$.
If $\Lambda(\calG) , \Lambda(\calF)\in\calW$ and $\Lambda(\calG) \prec \Lambda(\calF)$ then either $\Lambda(\calG) $ is a leaf or $n_\calG<n_\calF$. It follows that the number of generations in $\calW$ is bounded by $n$.

An individual in $\calW$ has five offspring 
in the case $r(\calF) \leq 4 n_\calF$.

For the next calculation, recall that $n_\calF \geq 3$.
If $r(\calF) > 4 n_\calF$ then, in view of \eqref{j18.12}, \eqref{j18.14}, \eqref{j18.10} and \eqref{j18.11}, the number of offspring is 
bounded above by
\begin{align}
4+2k_* &= 4+ 2  \lceil(U_2(\calF) - U_1(\calF))|\bv_\calF|/\beta\rceil
\leq 6+ 2  (U_2(\calF) - U_1(\calF))|\bv_\calF|/\beta \notag \\
&= 6+ 2  \frac{(U_2(\calF) - U_1(\calF))|\bv_\calF|}
{(r(\calF) - 2 n_\calF)/(n_\calF -1)}
\leq 6+ 2  \frac{(U_2(\calF) - U_1(\calF))|\bv_\calF|}
{(r(\calF) / 2 )/(n_\calF -1)} \notag \\
&\leq 6+ 2  \frac{200 n_\calF^3 |\bx_\calF(t_*(\calF))|}
{(r(\calF) / 2 )/(n_\calF -1)}
\leq 6+ 2  \frac{200 n_\calF^3 n_\calF ^{1/2}r(\calF)}
{(r(\calF) / 2 )/(n_\calF -1)} \notag \\
&\leq 6+ 800n_\calF^{9/2} \leq 1000 n_\calF ^{9/2}
\leq 1000 n^{9/2}.\label{j20.1}
\end{align}
This upper bound holds also in the case $r(\calF) \leq 4 n_\calF$.
Thus \eqref{j20.1} implies that
the number of individuals in the $k$-th generation is bounded by $(1300 n^3)^{k-1} $. Since the number of generations is bounded by $n$, the total number of individuals in $\calW$ is bounded by 
\begin{align}\label{j20.2}
n (1000 n^{9/2})^{n-1} \leq 1000^n n^{9n/2}.
\end{align}
 
\medskip
\emph{Step 7}. 
We will now bound the number of collisions. It follows from Step 5 that 
if two balls collide then there must exist a leaf
\begin{align*}
\Lambda(\calF)=(\calF, r(\calF), T_1(\calF) ,T_2(\calF)  , U_1(\calF) ,U_2(\calF) )
\in \calW
\end{align*}
such that the two balls belong to $\calF$ and the collision takes place in the interval $[T_1(\calF), T_2(\calF)]$.

First we will count collisions in open intervals of the form $(T_1(\calF), T_2(\calF))$.

If $n_\calF \leq 2$ then the number of collisions in $(T_1(\calF), T_2(\calF))$ is bounded by 1.

The argument in Step 5 (see \eqref{j18.5}) shows that
if $n_\calF \geq 3$ and  $\Lambda(\calF)$ is a leaf then $(T_1(\calF), T_2(\calF))=(U_1(\calF), U_2(\calF))$ and $r(\calF)\leq 4 n_\calF\leq 4 n$.
Hence, we can use \eqref{j17.5} as an upper bound for the number of collisions in $(T_1(\calF), T_2(\calF))$. We combine \eqref{j18.12}, \eqref{j17.5} and the estimate $r(\calF)\leq 4 n$ to obtain the following bound on the number of collisions in $(T_1(\calF), T_2(\calF))$ associated with $\Lambda(\calF)$,
\begin{align}\label{j20.3}
&200 n_\calF^3 |\bx_\calF(t_*(\calF))| \left(32 n_\calF^{3/2}\right)^{5^d n_\calF -2}
\leq 
200 n_\calF^3 n_\calF^{1/2} r(\calF) \left(32 n_\calF^{3/2}\right)^{5^d n_\calF -2}\\
&\qquad\leq
200 n^3 n^{1/2}\cdot 4n \left(32 n^{3/2}\right)^{5^d n -2}
=
800 n^{9/2} \left(32 n^{3/2}\right)^{5^d n -2}.\notag
\end{align}
This upper bound applies also to leaves $\Lambda(\calF)$ with $n_\calF \leq 2$.

The number of leaves in $\calW$ is bounded by the quantity in \eqref{j20.2} so, in view of \eqref{j20.3}, the total number of collisions in open intervals of the form $(T_1(\calF), T_2(\calF))$ is bounded by
\begin{align}\notag
&1000^n n^{9n/2}
800 n^{9/2} \left(32 n^{3/2}\right)^{5^d n -2}
=800\cdot 1000^n n^{(9/2)(n+1)} \left(32 n^{3/2}\right)^{5^d n -2}\\
&\quad =
800 \left( 1000 \cdot 32^{5^d}\right)^n  n^{((3/2)5^d+9/2) n +3/2}.
\label{n11.3}
\end{align}

The number of collisions at times $T_1(\calF)$ or $ T_2(\calF)$ is bounded by the product of (i) the number of individuals in $\calW$, (ii) number of endpoints of an interval, and (iii) one half of the  number of balls, so, in view of \eqref{j20.2}, it is bounded by
\begin{align*}
1000^n n^{9n/2} \cdot 2 \cdot n/2=1000^n n^{9n/2+1}.
\end{align*}
We combine this bound with \eqref{n11.3} to conclude that the number of collisions is bounded by
\begin{align*}
&800 \left( 1000 \cdot 32^{5^d}\right)^n  n^{((3/2)5^d+9/2) n +3/2}
+ 1000^n n^{9n/2+1}\\
&\quad\leq 
1600 \left( 1000 \cdot 32^{5^d}\right)^n  n^{((3/2)5^d+9/2) n +3/2}.
\end{align*}

\end{proof}

\begin{remark}
(i)
The estimates in Step 6 are crude and can be easily improved but we do not see a way to reduce the quantity in \eqref{j20.2} so that its logarithm is $o(n\log n)$. Even if we could, the logarithm of the quantity in \eqref{j17.3} is not 
$o(n\log n)$ so the bound in  \eqref{j19.2} would not change in a significant way.

(ii)
Let $\tau_d$ denote
the  kissing number  of a $d$-dimensional ball, i.e.,  the maximum
number of mutually nonoverlapping translates of the ball that can be arranged
so that they all touch the ball.
According to  \cite[Thm.~1.1.3]{Bez}, 
\begin{align*}
2^{0.2075d(1+ o(1))} \leq \tau_d \leq 2^{0.401d(1+o(1))}.
\end{align*}
In Step 2, we derived the bound $5^d$ for the number of balls that could collide with a given ball on a time interval of length one. The lower bound for the kissing number shows that the bound $5^d$ cannot be improved to be less than exponential in $d$.
\end{remark}

\section{Acknowledgments}

I am grateful to Soumik Pal for very helpful advice. I thank the anonymous referees for many significant suggestions for improvement.

\bibliographystyle{alpha}

\begin{thebibliography}{BFK98b}

\bibitem[Ale76]{Alex76}
Roger Alexander.
\newblock Time evolution for infinitely many hard spheres.
\newblock {\em Comm. Math. Phys.}, 49(3):217--232, 1976.

\bibitem[BD19]{BD}
Krzysztof Burdzy and Mauricio Duarte.
\newblock A lower bound for the number of elastic collisions.
\newblock {\em Comm. Math. Phys.}, 372(2):679--711, 2019.

\bibitem[BD20]{BuD18b}
Krzysztof Burdzy and Mauricio Duarte.
\newblock On the number of hard ball collisions.
\newblock {\em J. Lond. Math. Soc. (2)}, 101(1):373--392, 2020.

\bibitem[Bez10]{Bez}
K{\'a}roly Bezdek.
\newblock {\em Classical topics in discrete geometry}.
\newblock CMS Books in Mathematics/Ouvrages de Math\'ematiques de la SMC.
  Springer, New York, 2010.

\bibitem[BFK98a]{BFK5}
D.~Burago, S.~Ferleger, and A.~Kononenko.
\newblock A geometric approach to semi-dispersing billiards.
\newblock {\em Ergodic Theory Dynam. Systems}, 18(2):303--319, 1998.

\bibitem[BFK98b]{BFK2}
D.~Burago, S.~Ferleger, and A.~Kononenko.
\newblock Unfoldings and global bounds on the number of collisions for
  generalized semi-dispersing billiards.
\newblock {\em Asian J. Math.}, 2(1):141--152, 1998.

\bibitem[BFK98c]{BFK1}
D.~Burago, S.~Ferleger, and A.~Kononenko.
\newblock Uniform estimates on the number of collisions in semi-dispersing
  billiards.
\newblock {\em Ann. of Math. (2)}, 147(3):695--708, 1998.

\bibitem[BFK00]{BFK3}
D.~Burago, S.~Ferleger, and A.~Kononenko.
\newblock A geometric approach to semi-dispersing billiards.
\newblock In {\em Hard ball systems and the {L}orentz gas}, volume 101 of {\em
  Encyclopaedia Math. Sci.}, pages 9--27. Springer, Berlin, 2000.

\bibitem[BFK02]{BFK4}
D.~Burago, S.~Ferleger, and A.~Kononenko.
\newblock Collisions in semi-dispersing billiard on {R}iemannian manifold.
\newblock In {\em Proceedings of the {I}nternational {C}onference on {T}opology
  and its {A}pplications ({Y}okohama, 1999)}, volume 122, pages 87--103, 2002.

\bibitem[BI18]{BuI18}
Dmitri {Burago} and Sergei {Ivanov}.
\newblock {Examples of exponentially many collisions in a hard ball system}.
\newblock September 2018.
\newblock arXiv:1809.02800.

\bibitem[CI04]{IllnerChen}
Xinfu Chen and Reinhard Illner.
\newblock Finite-range repulsive systems of finitely many particles.
\newblock {\em Arch. Ration. Mech. Anal.}, 173(1):1--24, 2004.

\bibitem[Ill89]{Illner89}
Reinhard Illner.
\newblock On the number of collisions in a hard sphere particle system in all
  space.
\newblock {\em Transport Theory Statist. Phys.}, 18(1):71--86, 1989.

\bibitem[Ill90]{Illner90}
Reinhard Illner.
\newblock Finiteness of the number of collisions in a hard sphere particle
  system in all space. {II}. {A}rbitrary diameters and masses.
\newblock {\em Transport Theory Statist. Phys.}, 19(6):573--579, 1990.

\bibitem[Ser21]{Serre}
Denis Serre.
\newblock Hard spheres dynamics: weak vs strong collisions.
\newblock {\em Arch. Ration. Mech. Anal.}, 240(1):243--264, 2021.

\bibitem[Vas79]{Vaser79}
L.~N. Vaserstein.
\newblock On systems of particles with finite-range and/or repulsive
  interactions.
\newblock {\em Comm. Math. Phys.}, 69(1):31--56, 1979.

\end{thebibliography}
\def\cprime{$'$}

\end{document}